\documentclass[conference]{IEEEtran}
\IEEEoverridecommandlockouts

\usepackage{cite}
\usepackage{amsmath,amssymb,amsfonts,amsthm}
\usepackage{algorithmic}
\usepackage{graphicx}
\usepackage{textcomp}
\usepackage{xcolor}
\usepackage{multirow}
\usepackage{enumitem}
\usepackage{float}
\usepackage[colorlinks=true, linkcolor=blue, citecolor=blue, urlcolor=blue]{hyperref}
\newtheorem{theorem}{Theorem}
\newtheorem{lemma}[theorem]{Lemma}

\def\BibTeX{{\rm B\kern-.05em{\sc i\kern-.025em b}\kern-.08em
    T\kern-.1667em\lower.7ex\hbox{E}\kern-.125emX}}
\begin{document}
\title{List Reconstruction Problem with List Size Two}

\author{\textbf{Binh Vu}\IEEEauthorrefmark{1},
        \textbf{Shuche Wang}\IEEEauthorrefmark{2},
        and \textbf{Van Khu Vu}\IEEEauthorrefmark{1}\\[0.5mm]
\IEEEauthorblockA{\IEEEauthorrefmark{1} \small VinUniversity, Hanoi, Vietnam \\[1mm]}
\IEEEauthorblockA{\IEEEauthorrefmark{2} \small National University of Singapore, Singapore\\[1mm]}
{\footnotesize binh.vn@vinuni.edu.vn, shuche.wang@u.nus.edu, khu.vv@vinuni.edu.vn  }
}

\maketitle
\begin{abstract}
The problem of computing the cardinality of the intersection of multiple balls in the Hamming space has attracted a lot of attention recently due to their applications in the list reconstruction problem and information retrieval in Associative Memories.
In previous work, most of the results are for the cases where the radii of each ball, $r$ and the distance between the centers of these balls, $k$ are fixed when the length $n$ of each codeword tend to infinity. In this work, we focus on the case where $r = \alpha n$ and $k=\beta n$ for some constants $\alpha$ and $\beta$ and compute the maximum asymptotic rate of the cardinality of the intersection of three balls. We provide the maximum asymptotic rate as a function of two parameters $\alpha$ and $\beta$. We also provide numerical results and compare these results with the intersection of two balls.
\end{abstract}

\section{Introduction}
Let $\Sigma^n=\{0,1,\ldots q-1\}^n$ denote the $q$-ary Hamming space, and let
$d(\cdot,\cdot)$ be the Hamming distance. For each $i \in \Sigma$, let $i^n$ denote the element whose coordinates are all equal to $i$. For $x\in\Sigma^n$, the Hamming ball
of radius $r$ centered at $x$ is $B_r(x)=\{y\in\Sigma^n:d(x,y)\le r\}$.
Given $m$ centers $x_1,\ldots,x_m\in\Sigma^n$ with pairwise distances at least
$k$, we consider the maximum possible size of the common intersection
\begin{equation*}
N_r^{(n)}(q,m,k)
=
\max_{\substack{x_1,\ldots,x_m\in\Sigma^n\\ d(x_i,x_j)\ge k,\ i\ne j}}
\left|
\bigcap_{i=1}^m B_r(x_i)
\right|.
\end{equation*}
This quantity measures the largest possible ambiguity remaining after $m$
distance constraints are imposed.

When $q=2$, we simply write $N_r^{(n)}(2,m,k) = N_r^{(n)}(m,k)$. The study of such intersections is closely connected to Levenshtein's
sequence reconstruction problem, where a transmitted word is to be recovered
from several noisy outputs. This line of work was initiated by
Levenshtein~\cite{levenshtein2002efficient}, and has since been developed in
related models of information retrieval with multiple clues and list
reconstruction~\cite{junnila2015information,junnila2023levenshtein}. In the
substitution-error setting, the ambiguity of reconstruction is naturally
controlled by the maximum possible size of an intersection of Hamming balls.
This connection was exploited by Yaakobi and Bruck~\cite{yaakobi2018uncertainty}
in the binary setting, and by Junnila et al. in several fixed-parameter
and $q$-ary variants~\cite{junnila2020levenshtein,junnila2023levenshtein,junnila2025intersections}. Besides, related reconstruction and list-decoding problems in the Levenshtein
metric, including insertions and deletions, were studied by Abu-sini and Yaakobi~\cite{abu2021list,abu2021levenshtein,abu2023intersection}.

The case $m=2$ has been extensively studied, partly because of its role in
sequence reconstruction and in Gilbert--Varshamov type bounds. Levenshtein
initiated the study of intersections of two Hamming balls~\cite{levenshtein2002efficient}. Subsequent work analyzed their asymptotic
behavior when the radius and the distance between centers grow linearly with
the dimension. Jiang and Vardy~\cite{jiang2004asymptotic} studied the binary
case using computer-assisted methods, while Vu and Wu~\cite{vu2005improving}
extended related techniques to the $q$-ary setting. More recently, Kim
et al.~\cite{kim2022exponential} showed that the ratio between the intersection
volume and the volume of a single ball decays exponentially with the distance
between the centers.

For three or more balls, the situation is more delicate. In the binary case,
Yaakobi and Bruck~\cite{yaakobi2018uncertainty} derived an exact combinatorial
formula for the intersection of three Hamming balls and showed that
$N_r^{(n)}(m,k)=\Theta(n^{r-\lceil k/2\rceil})$ when $r,k,m$ are fixed constants
and $n$ is sufficiently large. Junnila and coauthors further studied related
fixed-parameter problems for substitution errors, including the cases of larger
alphabets, different error patterns, and exact formulas for
$N_r^{(n)}(m,k)$ under suitable assumptions on the parameters and sufficiently
large $n$~\cite{junnila2020levenshtein,junnila2023levenshtein,junnila2025intersections}.

In this paper, we study the binary three-ball problem in the regime $r=\alpha n, k=\beta n$, where $\alpha,\beta\in(0,1)$ are fixed constants. We treat $\alpha$ as the
relative radius and $\beta$ as the relative pairwise center distance. Our goal
is to determine the normalized logarithmic growth of $N_r^{(n)}(3,k)$, namely the
exponential rate of the largest three-ball intersection.

Our approach is based on a sphere decomposition. Instead of estimating the
intersection of three balls directly, we decompose each ball into Hamming
spheres and show that, up to polynomial factors, the exponential rate of the
ball intersection is determined by the largest intersection of three spheres.
This reduces the problem to a type-counting calculation.  The main result of this paper is an entropy variational formula for the
exponential rate of the largest three-ball intersection. In the linear regime with
$r=\alpha n$ and $k=\beta n$, we prove that $
N_{\alpha n}^{(n)}(3,\beta n)
=\exp\left((f_3(\alpha,\beta)+o(1))n\right)$, where the rate function $f_3(\alpha,\beta)$ is characterized by a
two-dimensional constrained entropy maximization problem. This variational
formula is derived by decomposing balls into spheres, reducing the resulting
sphere intersections to block-type counts, and applying Stirling's formula.

We also compare the three-ball exponent with the corresponding two-ball
exponent. Let $g_2(\alpha,\beta)$ denote the exponential rate of the
intersection of two Hamming balls of radius $\alpha n$ whose centers have
distance $\beta n$. Numerical evaluations show that the three-ball exponent
is smaller than the two-ball exponent, i.e., $f_3(\alpha,\beta)<g_2(\alpha,\beta)$. Moreover, the gap becomes especially pronounced near the critical boundary
$\beta=2\alpha$. As $\beta\to 2\alpha$, the three-ball exponent degenerates to 0.
At the boundary itself, the variational formula gives $f_3(\alpha,2\alpha)=0$.
Thus the three-ball intersection has no positive exponential growth rate at
the critical boundary. We further refine this statement in the critical window
$k=2t$ and $r=t+C$, where $C$ is fixed, by showing that the intersection is
polynomially bounded. This demonstrates a transition from exponential-size
intersections in the interior regime $\beta<2\alpha$ to polynomial-size
intersections at the boundary.

\section{Problem Setup and Extremal Reduction}

We study the maximum intersection of three Hamming balls whose centers are
separated by distance at least $k$:
\begin{equation*}
N_r^{(n)}(3,k)
=
\max_{\substack{x_1,x_2,x_3\in\Sigma^n\\ d(x_i,x_j)\ge k,\ i\ne j}}
\left|
B_r(x_1)\cap B_r(x_2)\cap B_r(x_3)
\right|.
\end{equation*}
Throughout the paper, we consider the binary case, i.e., $q=2$ and focus on the linear regime $r=\alpha n, k=\beta n$, where $\alpha,\beta\in(0,1)$ are fixed. While the three-ball intersection has
an exact combinatorial formula in prior work~\cite{yaakobi2018uncertainty}, that formula does not directly
yield a closed-form expression for the exponential rate in this linear-scaling
regime. We therefore study the normalized logarithmic growth of the
intersection, treating $\alpha$ and $\beta$ as the relative radius and relative
center distance. 

We first record the nontrivial parameter regime. If $\alpha<\beta/2$, then
$k>2r$, and the triangle inequality implies that even two balls with centers
at distance at least $k$ are disjoint. Hence $N_r^{(n)}(3,k)=0$. Therefore we
assume $\alpha\ge\beta/2$.There is also a feasibility constraint on the centers. For any three binary
vectors $x_1,x_2,x_3$, each coordinate contributes at most $2$ to $d(x_1,x_2)+d(x_1,x_3)+d(x_2,x_3)$. Consequently, $
d(x_1,x_2)+d(x_1,x_3)+d(x_2,x_3)\le 2n$. If all three pairwise distances are at least $k$, then $3k\le 2n$, or
equivalently $\beta\le 2/3$. Thus the nontrivial regime is $\beta/2\le \alpha,0<\beta\le 2/3$.

We next reduce the maximization over centers to a canonical configuration.
In \cite{cohen1997covering} (and \cite{honkala1987intersection}), it has been shown that if $q=2$ the intersection size $N_r^{(n)}(3,k)$
depends only on the three pairwise distances among the centers. A monotonicity
argument shows that decreasing these distances, while keeping the constraint
$d(x_i,x_j)\ge k$ feasible, cannot decrease the common intersection. Hence the
maximum is attained when the pairwise distances are minimal.

\begin{lemma}[Extremal center configuration]
For even $k=2t$, the maximum defining $N_r^{(n)}(3,k)$ is attained by a
triple of centers satisfying
\begin{equation*}
d(x_1,x_2)=d(x_1,x_3)=d(x_2,x_3)=k.
\end{equation*}
For odd $k$, the maximum is attained by a triple whose pairwise distances
are $k,k,k+1$, up to permutation.
\end{lemma}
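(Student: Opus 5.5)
The plan has two parts: a local monotonicity step for the intersection under a single coordinate flip, and a combinatorial descent on the triple of pairwise distances. I will not use the distance-only dependence from \cite{cohen1997covering} as a black box; the flip argument below proves the monotonicity directly.

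\textbf{Step 1 (local flip).} Fix centers $x_1,x_2,x_3$ and a coordinate $i$ at which one center is the minority, say $x_{1,i}=x_{2,i}\neq x_{3,i}$. Let $x_3'$ be $x_3$ with coordinate $i$ flipped. Then $d(x_1,x_3)$ and $d(x_2,x_3)$ each drop by one, and $d(x_1,x_2)$ is unchanged. I claim
\[
|B_r(x_1)\cap B_r(x_2)\cap B_r(x_3')|\ge |B_r(x_1)\cap B_r(x_2)\cap B_r(x_3)| .
\]
To prove it, split $\Sigma^n$ into pairs $\{u,v\}$ that differ only in coordinate $i$, where $u_i=x_{1,i}$ and $v_i$ is the other value. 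Let $a_1,a_2$ be the distances from $u$ to $x_1,x_2$; then $v$ is at distances $a_1+1,a_2+1$ from them. Let $e$ be the distance from $u$ to $x_3$ computed on the coordinates other than $i$. Then
\[
d(u,x_3)=e+1,\quad d(v,x_3)=e,\quad d(u,x_3')=e,\quad d(v,x_3')=e+1 .
\]
Define the indicators $A=[\max(a_1,a_2)\le r]$, $A'=[\max(a_1,a_2)+1\le r]$, $E=[e\le r]$ and $E'=[e+1\le r]$, so that $A'\le A$ and $E'\le E$. The pair contributes $AE'+A'E$ to the old intersection and $AE+A'E'$ to the new one. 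The difference is $(A-A')(E-E')\ge 0$. Summing over all pairs gives the claim.

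By relabeling, the same holds whichever center is the minority at $i$. So any move that lowers two of the three pairwise distances by one each, keeping the third fixed, never decreases the intersection.

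\textbf{Step 2 (which moves are available).} Write the pairwise distances as $a=d(x_1,x_2)$, $b=d(x_1,x_3)$, $c=d(x_2,x_3)$. The number of coordinates where $x_1$ is the minority is $(a+b-c)/2$, and this is the count that allows lowering the pair $(a,b)$; the other two pairs are symmetric. Binary triangle structure gives $a\le b+c$ (and its permutations) and $a+b+c$ even.

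Suppose $a\ge b\ge c\ge k\ge 1$. Then $a+b-c\ge b>0$, so the two largest distances can always be lowered together. I therefore run the following descent: repeatedly lower the two currently largest distances, as long as the result stays $\ge k$ in every coordinate. Each move preserves parity of the sum and the triangle inequalities. Each move also shrinks the spread $\max-\min$ until the triple is within one of balanced. After that the triple cycles through the shapes
\[
(m,m,m)\to(m,m-1,m-1)\to(m-1,m-1,m-2)\to(m-2,m-2,m-2).
\]

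\textbf{Step 3 (terminal states), and the main obstacle.} The descent can only stop at a state where lowering the two largest would push some distance below $k$. Using even sum and the near-balanced shape, these terminal states are $(k,k,k)$ when $k$ is even, and $(k+1,k,k)$ up to permutation when $k$ is odd. The latter has sum $3k+1$, which is even when $k$ is odd. Since Step 1 makes the intersection nondecreasing along the descent, any maximizing triple can be replaced by one with the stated distances, which proves the lemma.

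The hard part of Step 1 is that a naive injection $y\mapsto y$ or $y\oplus e_i$ has collisions; the pairing argument sidesteps this. The part I expect to need the most care is Step 3: checking that the descent really cannot halt at an unbalanced state such as $(2k,k,k)$. That state arises only if one first drives two distances down to $k$ while the third is still large. Always lowering the two largest prevents this, because it never lets the smallest distance fall below the others until the triple is balanced. I would verify this with a short invariant argument: the quantity $\max-\min$ is nonincreasing, and it strictly decreases whenever it exceeds $1$.
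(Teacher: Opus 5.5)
Your Step 1 pairing argument is correct; it proves the single-flip monotonicity that the paper only asserts. The gap is in Steps 2--3: the descent does not reach $(k,k,k)$ (resp.\ $(k+1,k,k)$) from every starting triple.

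\textbf{Why the descent can stall.} Let $n_j$ be the number of coordinates at which $x_j$ is the minority. Then $d(x_1,x_2)=n_1+n_2$, $d(x_1,x_3)=n_1+n_3$ and $d(x_2,x_3)=n_2+n_3$. Your move flips the minority bit at a type-$j$ coordinate, which makes that coordinate unanimous. So it lowers one $n_j$ by one and never raises any of them. The target $(k,k,k)$ with $k=2t$ is $n_1=n_2=n_3=t$, and $(k+1,k,k)$ with $k=2t+1$ is $(t+1,t+1,t)$. Hence no sequence of your moves, in any order, reaches the target from a triple with $\min_j n_j<t$.

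Concretely, $(2k,k,k)$ (where $x_3$ is a midpoint of $x_1,x_2$ and $n_3=0$) admits no admissible move at all. Neither does $(k+2,k,k)$ for even $k$, since every flip lowers one of the distances equal to $k$. In fact your descent halts at every triple $(a',k,k)$ with $a'$ even and $k\le a'\le 2k$, not only at the balanced one. Your spread invariant is also false: from $(a,c,c)$ with $a\ge c+2$ and $c>k$, one step gives $(a-1,c-1,c)$, which has the same spread. You argued that $(2k,k,k)$ arises only after driving two distances down to $k$, but the maximizing triple you start from may already have this form. So what you have actually shown is only that some triple $(a',k,k)$ is extremal.

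\textbf{What is missing.} You need a monotone move that lowers one distance while fixing the other two; no single-coordinate change can do this. One move that works: pick a coordinate $i$ where $x_1$ is the minority and a coordinate $j$ where $x_2$ is the minority. Let $x_2'$ be $x_2$ with both coordinates $i$ and $j$ flipped. Then $d(x_1,x_2')=d(x_1,x_2)-2$, and the other two distances are unchanged.

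Your pairing proof adapts as follows. Group $y$ by its restriction outside $\{i,j\}$, and let $s_\ell$ be the distance to $x_\ell$ there. Only the two patterns $y|_{\{i,j\}}=x_1|_{\{i,j\}}$ and $y|_{\{i,j\}}=x_2|_{\{i,j\}}$ change their contribution. Set $P=[s_1+2\le r]\le P'=[s_1\le r]$, $Q=[s_2+2\le r]\le Q'=[s_2\le r]$ and $R=[s_3+1\le r]$. Then the new-minus-old difference is $R(P'-P)(Q'-Q)\ge 0$.

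Apply this to a terminal triple $(a',k,k)$, where $n_1=n_2=a'/2\ge 1$. Repeating it lowers $a'$ to $k$ or $k+1$, which closes the argument.
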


For the remainder of the main analysis, we treat the even case $k=2t$.
The odd case is analogous and differs only in rounding terms. By symmetry of the
Hamming cube, we choose the centers as
\begin{equation*}
a=0^n,\qquad
b=1^{2t}0^{n-2t},\qquad
c=1^t0^t1^t0^{n-3t}.
\end{equation*}
These centers satisfy $d(a,b)=d(a,c)=d(b,c)=2t=k$. Define the common intersection set and its cardinality by
\begin{equation*}
\mathcal I_r(a,b,c)
=
B_r(a)\cap B_r(b)\cap B_r(c),\, I_r(a,b,c)
=
|\mathcal I_r(a,b,c)|.
\end{equation*}
Therefore, for even $k$, the original extremal quantity reduces to $
N_r^{(n)}(3,k)=I_r(a,b,c)$.

\section{Sphere Reduction and Counting Three-Sphere Intersections}

\subsection{Decomposing balls into spheres}

For $x\in\Sigma^n$ and $0\le s\le n$, define the Hamming sphere as $
V_s(x)=\{y\in\Sigma^n:d(x,y)=s\}$. Then, we have
\begin{equation*}
B_r(x)=\bigcup_{s=0}^{r} V_s(x),
\end{equation*}
where the union is disjoint. Therefore, for the canonical centers $a,b,c$
fixed above, we have
\begin{equation*}
\mathcal I_r(a,b,c)
=
\bigcup_{0\le r_1,r_2,r_3\le r}
\left(
V_{r_1}(a)\cap V_{r_2}(b)\cap V_{r_3}(c)
\right).
\end{equation*}
Consequently, we also have the size of $\mathcal I_r(a,b,c)$
\begin{equation*}
I_r(a,b,c)=
\sum_{0\le r_1,r_2,r_3\le r}
|V_{r_1}(a)\cap V_{r_2}(b)\cap V_{r_3}(c)|.
\end{equation*}

Then, we define the largest contribution from a single
distance profile $(r_1,r_2,r_3)$ as
\begin{equation*}
M_r(a,b,c)
=
\max_{0\le r_1,r_2,r_3\le r}
|V_{r_1}(a)\cap V_{r_2}(b)\cap V_{r_3}(c)|.
\end{equation*}
Since the above sum contains at most $(r+1)^3$ terms, we have
\begin{equation*}
M_r(a,b,c)
\le I_r(a,b,c)\le (r+1)^3 M_r(a,b,c).
\end{equation*}
Because $r=\alpha n=O(n)$, the prefactor $(r+1)^3$ is polynomial in $n$ and
therefore does not affect the exponential rate, i.e., $1/n\cdot\log (r+1)^3=o(1)$.
Thus, whenever the intersection is nonempty, we have
\begin{equation*}
\frac{1}{n}\log I_r(a,b,c)
=
\frac{1}{n}\log M_r(a,b,c)+o(1).
\end{equation*}
Hence, to determine the asymptotic exponential rate of the three-ball
intersection, it suffices to estimate the largest intersection of three
Hamming spheres with radii at most $r$.

\subsection{Canonical centers}

We now compute $|V_{r_1}(a)\cap V_{r_2}(b)\cap V_{r_3}(c)|$ for the canonical centers $a=0^n, b=1^{2t}0^{n-2t}, c=1^t0^t1^t0^{n-3t}$, where $k=2t$. These centers induce a partition of the coordinates into four
blocks of sizes $t,t,t,n-3t$. In these four blocks, the three centers have the
following forms:
\begin{equation*}
\begin{array}{c|cccc}
 & I_1 & I_2 & I_3 & I_4 \\
\hline
a & 0^t & 0^t & 0^t & 0^{n-3t} \\
b & 1^t & 1^t & 0^t & 0^{n-3t} \\
c & 1^t & 0^t & 1^t & 0^{n-3t}.
\end{array}
\end{equation*}

Fix radii $r_1,r_2,r_3\le r$, and let $x\in V_{r_1}(a)\cap V_{r_2}(b)\cap V_{r_3}(c)$. Let $a_j$ denote the number of ones of $x$ in block $I_j$, for
$j=1,2,3,4$. Then $x$ has the block form
\begin{equation*}
x=
1^{a_1}0^{t-a_1}
1^{a_2}0^{t-a_2}
1^{a_3}0^{t-a_3}
1^{a_4}0^{n-3t-a_4}.
\end{equation*}
The three distance constraints $d(x,a)=r_1$, $d(x,b)=r_2$, and
$d(x,c)=r_3$ give
\begin{align*}
&a_1+a_2+a_3+a_4=r_1,\\
&(t-a_1)+(t-a_2)+a_3+a_4=r_2,\\
&(t-a_1)+a_2+(t-a_3)+a_4=r_3.
\end{align*}
Thus, once $a_1$ is fixed, the remaining variables are determined as follows
\begin{align}\label{eq:a234}
&a_2=(r_1+2t-r_2-2a_1)/2,\nonumber\\
&a_3=(r_1+2t-r_3-2a_1)/2,\nonumber\\
&a_4=(2a_1+r_2+r_3-4t)/2.
\end{align}

For fixed $a_1,r_1,r_2,r_3$, let $S_{a_1,r_1,r_2,r_3}$ denote the number of vectors $x$ in the three-sphere intersection with exactly $a_1$ ones in the first coordinate block:
\begin{equation*}
S_{a_1,r_1,r_2,r_3}
=
\binom{t}{a_1}
\binom{t}{a_2}
\binom{t}{a_3}
\binom{n-3t}{a_4},
\end{equation*}
where $a_2, a_3, a_4$ can be written as Eqn.~\eqref{eq:a234} in terms of $a_1, r_1, r_2$ and $r_3$. The first binomial chooses the positions of the ones of $x$ in block $I_1$; the second and third binomials choose the positions of the ones in blocks
$I_2$ and $I_3$; and the last binomial chooses the positions of the ones in
block $I_4$.

Therefore, for fixed $r_1,r_2,r_3$, the size of the three-sphere intersection
is
\begin{equation*}
|V_{r_1}(a)\cap V_{r_2}(b)\cap V_{r_3}(c)|
=
\sum_{a_1=0}^{t} S_{a_1,r_1,r_2,r_3}.
\end{equation*}
Since the sum over $a_1$ has at most $t+1$ terms, it is exponentially
equivalent to its largest term:
\begin{multline*}
\max_{0\le a_1\le t} S_{a_1,r_1,r_2,r_3}
\le
|V_{r_1}(a)\cap V_{r_2}(b)\cap V_{r_3}(c)|
\\\le
(t+1)\max_{0\le a_1\le t} S_{a_1,r_1,r_2,r_3}.
\end{multline*}
Combining this bound with the sphere reduction gives us
\begin{equation*}
\frac{1}{n}\log I_r(a,b,c)
=
\max_{\substack{0\le r_1,r_2,r_3\le r\\ 0\le a_1\le t\\
S_{a_1,r_1,r_2,r_3}>0}}
\frac{1}{n}\log S_{a_1,r_1,r_2,r_3}
+o(1).
\end{equation*}
Thus the original three-ball problem has been reduced to a finite-dimensional
discrete optimization over the parameters $a_1,r_1,r_2,r_3$. In the next
section, we pass to normalized variables and use Stirling's formula to obtain
the corresponding entropy maximization problem.

\section{Entropy Rate for Three Balls}

In the previous section, we reduced the three-ball intersection problem to the
maximization of $S_{a_1,r_1,r_2,r_3}$. We now simplify this discrete
optimization and pass to normalized variables. The goal is to identify the
exponential rate of $I_r(a,b,c)$ as a variational problem depending only on
the relative radius $\alpha$ and the relative center distance $\beta$.

\subsection{Symmetry reduction: $r_2=r_3$}

The centers $b$ and $c$ play symmetric roles in the canonical construction.
Therefore, one expects the dominant distance profile to have equal distances
from $x$ to $b$ and $c$. The next lemma formalizes this intuition.

\begin{lemma}[Symmetry reduction]
For determining the exponential rate of $I_r(a,b,c)$, it suffices to consider
terms with $r_2=r_3$. Equivalently, the maximum of
$S_{a_1,r_1,r_2,r_3}$ can be attained at a distance profile satisfying
\begin{equation*}
r_2=r_3.
\end{equation*}
\end{lemma}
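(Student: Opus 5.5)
The plan is to use a log-concavity argument in the two variables $a_2,a_3$, together with the symmetry $a_2\leftrightarrow a_3$, which corresponds exactly to $r_2\leftrightarrow r_3$.

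First, reparametrize the terms by the block counts $(a_1,a_2,a_3,a_4)$ themselves. The distances are then recovered as
\begin{align*}
r_1&=a_1+a_2+a_3+a_4,\\
r_2&=2t-a_1-a_2+a_3+a_4,\\
r_3&=2t-a_1+a_2-a_3+a_4.
\end{align*}
The term becomes
\begin{equation*}
S=\binom{t}{a_1}\binom{t}{a_2}\binom{t}{a_3}\binom{n-3t}{a_4}.
\end{equation*}
In these variables $r_2=r_3$ holds if and only if $a_2=a_3$. Swapping $a_2$ and $a_3$ leaves $S$, $r_1$ and $a_1,a_4$ unchanged and exchanges $r_2$ with $r_3$. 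In particular, the feasibility constraint $r_2,r_3\le r$ is preserved under the swap.

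Next, take a maximizing tuple and symmetrize it. Suppose $(a_1,a_2,a_3,a_4)$ attains the maximum, and assume without loss of generality that $a_2\ge a_3$.

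If $a_2+a_3=2m$ is even, replace $(a_2,a_3)$ by $(m,m)$. Then $r_1$ is unchanged and $r_2=r_3=2t-a_1+a_4$, which is the average of the old $r_2$ and $r_3$. Hence $r_2=r_3\le\max(\text{old }r_2,r_3)\le r$, so the new tuple is feasible. The new product satisfies $\binom{t}{m}^2\ge\binom{t}{a_2}\binom{t}{a_3}$, because $j\mapsto\binom{t}{j}$ is log-concave: the ratio $\binom{t}{j+1}/\binom{t}{j}=(t-j)/(j+1)$ is decreasing in $j$. Moving two indices toward each other while keeping their sum fixed therefore cannot decrease the product. So the maximum is attained with $a_2=a_3$, that is, with $r_2=r_3$.

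The main obstacle is the parity case, where $a_2+a_3$ is odd. In that case exact equality $r_2=r_3$ cannot be reached with $a_1,a_4$ held fixed, since $r_2-r_3=2(a_3-a_2)$. I would handle it in two ways.

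First, the log-concavity step still brings $(a_2,a_3)$ to $(m+1,m)$ with $|r_2-r_3|=2$, and again $\max(r_2,r_3)$ does not increase.

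Second, I would change one of the other coordinates by one, for example $a_2\to a_2-1$ or $a_4\to a_4\pm1$. Such a change multiplies $S$ by a factor that is at least $1/\max(t,n)$ in the regime where the binomials are positive and not at their boundary. Boundary cases, such as $a_4=0$ or $m=t$, are treated separately by choosing the direction of the change that stays feasible. The $\max(r_2,r_3)\le r$ constraint can be maintained by choosing the lowering direction.

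This polynomial loss does not affect the exponential rate. So the statement holds in the sense claimed: for determining the exponential rate it suffices to consider $r_2=r_3$. In the even-parity case the maximum is attained exactly at $r_2=r_3$.

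Finally, if one prefers a continuous statement for the next section, the same argument applies after passing to normalized variables. The rate $\sum_i \lambda_i h(\cdot)$ is concave, and the feasible region is convex and invariant under the swap $a_2\leftrightarrow a_3$. Averaging a maximizer with its swapped image therefore gives a symmetric maximizer, by Jensen's inequality.
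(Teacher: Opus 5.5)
Your proof is correct and follows the paper's argument. Holding $a_1,a_4$ fixed is exactly the paper's holding $a_1,r_1,r_2+r_3$ fixed, and both then balance $\binom{t}{a_2}\binom{t}{a_3}$ at fixed $a_2+a_3$. You are more careful than the paper in three places: you invoke log-concavity, which is what actually justifies balancing (unimodality alone would not); you check that $\max(r_2,r_3)$ does not increase; and you quantify the odd-parity case by the single move $a_2\to a_2-1$, which costs at most a factor $t$. The alternative move $a_4\to a_4\pm1$ that you also mention does not by itself make $a_2=a_3$ and is not needed.
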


\begin{proof}
Fix $a_1$, $r_1$, and the sum $r_2+r_3$. In the expression
$S_{a_1,r_1,r_2,r_3}$, the first binomial coefficient depends only on $a_1$,
and the last binomial coefficient depends only on $r_2+r_3$. Thus, for fixed
$a_1$, $r_1$, and $r_2+r_3$, the only terms affected by changing $r_2$ and
$r_3$ are the two middle binomial coefficients.

We define
\begin{equation*}
u_2=\frac{r_1+2t-r_2-2a_1}{2},
\qquad
u_3=\frac{r_1+2t-r_3-2a_1}{2}.
\end{equation*}
Here $u_2$ and $u_3$ are the numbers
of ones of $x$ in the second and third coordinate blocks, respectively. Since
$r_2+r_3$ is fixed, the sum $u_2+u_3$ is also fixed. Hence, we need to maximize $\binom{t}{u_2}\binom{t}{u_3}$ over $u_2+u_3$ fixed. By the unimodality of binomial coefficients, this
product is maximized when $u_2$ and $u_3$ are as balanced as possible. Up to
integer rounding, this gives $u_2=u_3$. By the definitions of $u_2$ and $u_3$, this is equivalent to $r_2=r_3$. Thus, the dominant contribution may be taken from distance profiles with $r_2=r_3$.
\end{proof}

After this reduction, we write $r_2=r_3$ and denote this common value by
$r_2$. The variable $r_2$ now represents the common distance from the candidate
point $x$ to the two symmetric centers $b$ and $c$. The corresponding term is
\begin{equation*}
S_{a_1,r_1,r_2,r_2}
=
\binom{t}{a_1}
\binom{t}{\frac{r_1+2t-r_2-2a_1}{2}}^2
\binom{n-3t}{a_1+r_2-2t}.
\end{equation*}

\subsection{Boundary reduction for $r_1$}

We next optimize over $r_1$ for fixed $a_1$ and $r_2$. The variable $r_1$ is
the distance from $x$ to the center $a=0^n$. For fixed $a_1$ and $r_2$, the
first and last binomial coefficients in $S_{a_1,r_1,r_2,r_2}$ do not depend on
$r_1$. Only the middle binomial coefficient depends on $r_1$.

Let $q=(r_1+2t-r_2-2a_1)/2$. The variable $q$ is the number of ones of $x$ in each of the second and third
coordinate blocks after the symmetry reduction. For fixed $a_1$ and $r_2$,
maximizing $S_{a_1,r_1,r_2,r_2}$ over $r_1$ is therefore equivalent to making
$\binom{t}{q}$ as large as possible.

Since $\binom{t}{q}$ is maximized when $q$ is as close as possible to $t/2$,
the unconstrained optimal choice satisfies
\begin{equation*}
r_1+2t-r_2-2a_1
=2.
\end{equation*}
Equivalently, we have $r_1=r_2+2a_1-t$. However, $r_1$ is a sphere radius inside the ball $B_r(a)$, so it must satisfy
$r_1\le r$. Hence the maximizing choice is
\begin{equation*}
r_1^\star(a_1,r_2)
=
\min\{r,\ r_2+2a_1-t\},
\end{equation*}
whenever this value is feasible.

This reduction has a simple interpretation. For fixed $a_1$ and common
distance $r_2$, the best choice of $r_1$ tries to balance the number of ones
placed in the second and third coordinate blocks. If this balancing choice
exceeds the allowed radius $r$, the optimum is forced to lie on the boundary
$r_1=r$.

\subsection{Main theorem: entropy maximization formula}

We now pass to normalized variables. After the symmetry reduction
$r_2=r_3$ and the optimization over $r_1$, the remaining free discrete
variables are $a_1$ and the common radius $r_2$. We define
\begin{equation*}
\theta=a_1/n,
\qquad
\delta=r_2/n.
\end{equation*}
Here $\theta$ is the normalized number of ones of $x$ in the first coordinate
block, and $\delta$ is the normalized common distance from $x$ to the two
symmetric centers $b$ and $c$.

For fixed $a_1$ and $r_2$, the optimal value of $r_1$ is
\begin{equation*}
r_1^\star(a_1,r_2)
=
\min\{r,\ r_2+2a_1-t\}.
\end{equation*}
We denote its normalized value by
\begin{equation*}
\rho^\star(\theta,\delta)
=
r_1^\star(a_1,r_2)/n.
\end{equation*}
Thus $\rho^\star(\theta,\delta)$ is the optimized normalized distance from
$x$ to the center $a$. Since $r=\alpha n$ and $t=\beta n/2$, we have
\begin{equation*}
\rho^\star(\theta,\delta)
=
\min\left\{
\alpha,\ \delta+2\theta-\beta/2
\right\}.
\end{equation*}
Importantly, $\rho^\star(\theta,\delta)$ is not an additional free variable;
it is determined by the two free variables $\theta$ and $\delta$.

We next define the block densities induced by $\theta$, $\delta$, and
$\rho^\star(\theta,\delta)$. First, define $p_1(\theta)
=
\frac{2\theta}{\beta}$, where $p_1(\theta)$ is the fraction of ones of $x$ in the first block,
whose length is $t=\beta n/2$.

Next, we define
\begin{equation*}
p_{23}(\theta,\delta)
=(\rho^\star(\theta,\delta)+\beta-\delta-2\theta)/\beta.
\end{equation*}
The quantity $p_{23}(\theta,\delta)$ is the common fraction of ones of $x$ in
the second and third blocks after imposing the symmetry reduction $r_2=r_3$.
These two blocks have the same length $t=\beta n/2$ and the same number of
ones in the dominant profile.

Finally, we define
\begin{equation*}
p_4(\theta,\delta)
=
(\theta+\delta-\beta)/(1-3\beta/2).
\end{equation*}
The quantity $p_4(\theta,\delta)$ is the fraction of ones of $x$ in the fourth
block, whose length is $n-3t=(1-3\beta/2)n$.

Let $H$ be the binary entropy function with natural logarithms as
$H(p)=-p\log p-(1-p)\log(1-p), p\in[0,1]$.
The value $H(p)$ gives the exponential rate of choosing a fraction $p$ of
positions from a large coordinate block.

Then, we can define the feasible set as
\begin{multline*}
\mathcal D_{\alpha,\beta}
=
\Big\{
(\theta,\delta):
0\le \delta\le \alpha,\
0\le \rho^\star(\theta,\delta)\le \alpha,\ \\
0\le p_1(\theta)\le 1,\
0\le p_{23}(\theta,\delta)\le 1,\
0\le p_4(\theta,\delta)\le 1
\Big\}.
\end{multline*}
The set $\mathcal D_{\alpha,\beta}$ is the continuous version of the discrete
feasibility constraints. Equivalently, it contains exactly those normalized
profiles for which all four binomial coefficients in the counting formula have
valid arguments.

For $(\theta,\delta)\in\mathcal D_{\alpha,\beta}$, we define
\begin{multline*}
\Phi_{\alpha,\beta}(\theta,\delta)
=
\beta/2\cdot H(p_1(\theta))
+
\beta H(p_{23}(\theta,\delta))
\\+
\left(1-3\beta/2\right)H(p_4(\theta,\delta)).
\end{multline*}
The function $\Phi_{\alpha,\beta}(\theta,\delta)$ is the normalized logarithmic
size of the block-composition class determined by $(\theta,\delta)$. The first
term comes from the first block, the second term combines the symmetric second
and third blocks, and the third term comes from the fourth block.

We then define
\begin{equation*}
f_3(\alpha,\beta)
=
\max_{(\theta,\delta)\in\mathcal D_{\alpha,\beta}}
\Phi_{\alpha,\beta}(\theta,\delta).
\end{equation*}
The quantity $f_3(\alpha,\beta)$ is the asymptotic exponential rate of the
three-ball intersection.

\begin{theorem}[Entropy rate of three-ball intersection]
Assume $0<\beta<2/3$ and $\alpha\ge \beta/2$. In the even-distance case
$k=2t$, with $r=\alpha n$ and $k=\beta n$, the canonical three-ball
intersection satisfies
\begin{equation*}
\frac{1}{n}\log I_r(a,b,c)
=
f_3(\alpha,\beta)+o(1).
\end{equation*}
Equivalently, $I_r(a,b,c)=\exp\left((f_3(\alpha,\beta)+o(1))n\right)$. By the extremal reduction, the same exponential rate gives
\begin{equation*}
N_r^{(n)}(3,k)
=
\exp\left((f_3(\alpha,\beta)+o(1))n\right).
\end{equation*}
\end{theorem}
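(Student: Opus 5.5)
The plan is to combine the discrete reductions already obtained with a standard Stirling/type-counting argument, proving matching upper and lower bounds on $\frac1n\log I_r(a,b,c)$.

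\emph{Starting point.} From Section III we have
\begin{equation*}
\frac{1}{n}\log I_r(a,b,c)=\max_{a_1,r_1,r_2,r_3}\frac{1}{n}\log S_{a_1,r_1,r_2,r_3}+o(1).
\end{equation*}
The symmetry reduction lemma lets us restrict to $r_2=r_3$. The loss from integer rounding, when $u_2+u_3$ is odd, is at most a factor $t$, which is polynomial. The boundary reduction then replaces $r_1$ by $r_1^\star(a_1,r_2)$.

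The one point to check here is that the balancing choice (middle binomial centered at $q\approx t/2$) is always admissible. If $r_2+2a_1-t<0$, the value $q=t/2$ is not attainable. In that case I would take the closest feasible $q$, and observe that the same clipping is already encoded in $\mathcal D_{\alpha,\beta}$ through the constraints $\rho^\star\ge 0$ and $0\le p_{23}\le1$. I expect this case to be harmless, because $\binom{t}{q}$ is unimodal in $q$.

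After this, the discrete maximum runs over pairs $(a_1,r_2)$ only. With $\theta=a_1/n$ and $\delta=r_2/n$, the four binomials have arguments $a_1=p_1 t$, $q=p_{23}t$ (twice) and $a_1+r_2-2t=p_4(n-3t)$. The last identity is simply $a_4$ with $r_2=r_3$.

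\emph{Upper bound.} I will use the uniform estimate
\begin{equation*}
\binom{m}{j}\le e^{mH(j/m)},
\end{equation*}
valid for all $0\le j\le m$. This gives
\begin{equation*}
\frac1n\log S_{a_1,r_1^\star,r_2,r_2}\le \Phi_{\alpha,\beta}(\theta,\delta)
\end{equation*}
exactly, with no error term, for every feasible lattice point. Every such lattice point lies in $\mathcal D_{\alpha,\beta}$, since the binomials are nonzero only for valid arguments. Hence the maximum is at most $f_3(\alpha,\beta)$.

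One caveat is that $t=\beta n/2$ and $r=\alpha n$ must be integers. I would handle this by replacing $\beta,\alpha$ with $2t/n$ and $r/n$, and using the continuity of $f_3$ in $(\alpha,\beta)$ discussed below.

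\emph{Lower bound.} Take a maximizer $(\theta^*,\delta^*)\in\mathcal D_{\alpha,\beta}$. The set $\mathcal D_{\alpha,\beta}$ is compact and $\Phi$ is continuous, so a maximizer exists. Round it to a nearby lattice point $(a_1,r_2)$ such that the induced $a_2=a_3$ and $a_4$ are integers in range and $r_1^\star\le r$. The parity condition in \eqref{eq:a234} can be met by adjusting $r_1$ or $r_2$ by one.

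Then apply the lower bound
\begin{equation*}
\binom{m}{j}\ge \frac{1}{m+1}e^{mH(j/m)}.
\end{equation*}
Together with the uniform continuity of $H$ on $[0,1]$, this gives $\frac1n\log S\ge \Phi(\theta^*,\delta^*)-o(1)$.

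\emph{Main obstacle.} The rounding step needs care because the maximizer may lie on the boundary of $\mathcal D_{\alpha,\beta}$, for example at $\rho^\star=\alpha$ or at $p_4=0$. Lattice points near it may then be infeasible, and some such points could even fall outside the cube. To handle this I would first perturb $(\theta^*,\delta^*)$ into the interior of $\mathcal D_{\alpha,\beta}$, which has nonempty interior when $\alpha>\beta/2$ and $\beta<2/3$. Continuity of $\Phi$ means this costs only $\varepsilon$ in the exponent. I would then round, and finally let $\varepsilon\to0$.

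The degenerate case $\alpha=\beta/2$ needs separate treatment, since there the feasible region can collapse. I would check directly that it forces $\theta,\delta$ to values giving $f_3=0$, and that the intersection is nonempty but subexponential. This matches the later discussion of the critical boundary $\beta=2\alpha$.

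Finally, the statement for $N_r^{(n)}(3,k)$ follows immediately from the extremal center configuration lemma, which gives $N_r^{(n)}(3,k)=I_r(a,b,c)$ for even $k$.
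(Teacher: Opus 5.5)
Your proposal is correct and follows the same route as the paper's proof: sphere/type reduction, the symmetry reduction $r_2=r_3$, the boundary choice $r_1^\star$, then Stirling and the passage from the lattice to $\mathcal D_{\alpha,\beta}$. Your explicit bounds $\frac{1}{m+1}e^{mH(j/m)}\le\binom{m}{j}\le e^{mH(j/m)}$, together with the interior-perturbation step, make rigorous what the paper compresses into ``uniformly by Stirling'' and ``equivalent up to $o(1)$.'' One simplification: the case $r_2+2a_1-t<0$ you worry about never arises, since $a_4=a_1+r_2-2t\ge 0$ gives $r_2+2a_1-t=a_1+a_4+t\ge t$.
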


\begin{proof}
From the sphere reduction and the binomial formula, we have
\begin{equation*}
\frac{1}{n}\log I_r(a,b,c)
=
\max_{\substack{0\le r_1,r_2,r_3\le r\\ 0\le a_1\le t\\
S_{a_1,r_1,r_2,r_3}>0}}
\frac{1}{n}\log S_{a_1,r_1,r_2,r_3}
+o(1).
\end{equation*}
The symmetry reduction allows us to restrict to $r_2=r_3$, up to rounding
terms that contribute only $o(1)$ to the normalized logarithm. The boundary
reduction then optimizes over $r_1$ and gives $r_1=r_1^\star(a_1,r_2)$.
Thus the remaining maximization is over $a_1$ and $r_2$, or equivalently over
their normalized versions $\theta$ and $\delta$.

For feasible $(\theta,\delta)$, the four binomial coefficients in
$S_{a_1,r_1^\star,r_2,r_2}$ correspond to block lengths $t, t, t, n-3t,$
with success fractions
$p_1(\theta), p_{23}(\theta,\delta), p_{23}(\theta,\delta), p_4(\theta,\delta)$,
respectively. By Stirling's formula, uniformly over feasible profiles,
\begin{multline*}
1/n\cdot\log S_{a_1,r_1^\star,r_2,r_2}
=\beta/2\cdot H(p_1(\theta))+
\beta H(p_{23}(\theta,\delta))
\\+
\left(1-3\beta/2\right)H(p_4(\theta,\delta))
+
o(1).
\end{multline*}
Therefore, we can obtain that
\begin{equation*}
1/n\cdot\log S_{a_1,r_1^\star,r_2,r_2}
=
\Phi_{\alpha,\beta}(\theta,\delta)+o(1).
\end{equation*}
Maximizing over the discrete feasible lattice is equivalent, up to $o(1)$, to
maximizing $\Phi_{\alpha,\beta}$ over the continuous feasible set
$\mathcal D_{\alpha,\beta}$. Hence
\begin{multline*}
1/n\cdot\log I_r(a,b,c)
=
\max_{(\theta,\delta)\in\mathcal D_{\alpha,\beta}}
\Phi_{\alpha,\beta}(\theta,\delta)
+
o(1)
\\=
f_3(\alpha,\beta)+o(1).
\end{multline*}
This proves the theorem.
\end{proof}

\section{Numerical Evaluation and Critical Behavior}

The previous section characterizes the three-ball intersection through the
variational formula
\begin{equation*}
f_3(\alpha,\beta)
=
\max_{(\theta,\delta)\in\mathcal D_{\alpha,\beta}}
\Phi_{\alpha,\beta}(\theta,\delta).
\end{equation*}
Here $f_3(\alpha,\beta)$ is the exponential rate of the three-ball
intersection, $\theta$ represents the normalized number of ones in the first
coordinate block, and $\delta$ represents the normalized common distance from
the candidate vector to the two symmetric centers $b$ and $c$. The feasible set
$\mathcal D_{\alpha,\beta}$ enforces that all block densities are valid
probabilities and that all sphere radii remain within the ball radius.

Although the formula is explicit, it does not generally simplify to a single
closed-form expression. The main reason is that the optimized normalized
distance to the first center, $\rho^\star(\theta,\delta)
=\min\left\{
\alpha,\ \delta+2\theta-\beta/2\right\}$, is piecewise linear. Thus the maximizer may lie in different active-constraint
regions depending on the values of $\alpha$ and $\beta$. Equivalently, the
optimizer may either occur in the region where the balancing choice of $r_1$
is feasible, or on the boundary where $r_1=r$. For this reason, a fully
expanded expression for $f_3(\alpha,\beta)$ would require a lengthy
case-by-case description of the active constraints.

In our numerical evaluation, we compute $f_3(\alpha,\beta)$ directly from the
two-dimensional variational formula. Concretely, for each fixed pair
$(\alpha,\beta)$, we optimize $\Phi_{\alpha,\beta}(\theta,\delta)$ over the
feasible set $\mathcal D_{\alpha,\beta}$. Since the optimization is only
two-dimensional, this can be done reliably by a dense grid search followed by a
local constrained refinement. Let $\mathcal G_N$ denote a grid of mesh size
$1/N$ in the $(\theta,\delta)$ plane. The grid approximation is $f_{3,N}(\alpha,\beta)
=
\max_{(\theta,\delta)\in\mathcal D_{\alpha,\beta}\cap\mathcal G_N}
\Phi_{\alpha,\beta}(\theta,\delta)$.
The quantity $f_{3,N}(\alpha,\beta)$ is the best exponent found among the
finite collection of normalized block-composition profiles in the grid.

We compare this three-ball exponent with the corresponding exponent for the
intersection of two Hamming balls. Let $g_2(\alpha,\beta)$ denote the
two-ball exponent, where the two balls have radius $\alpha n$ and their
centers have distance $\beta n$. In the regime $\beta/2\le \alpha\le 1/2$, the
standard sphere-decomposition calculation gives
\begin{equation*}
g_2(\alpha,\beta)
=
\beta\log 2
+
(1-\beta)
H\left(
(\alpha-\beta/2)/(1-\beta)
\right).
\end{equation*}
Here the term $\beta\log 2$ comes from the coordinates on which the two
centers differ, while the entropy term comes from the remaining coordinates.

Figure~\ref{fig:rate-comparison} compares $f_3(\alpha,\beta)$ and
$g_2(\alpha,\beta)$ for several fixed values of $\alpha$ while varying
$\beta$ toward the critical value $2\alpha$.

\begin{figure}[t]
    \centering
    \includegraphics[width=0.32\textwidth]{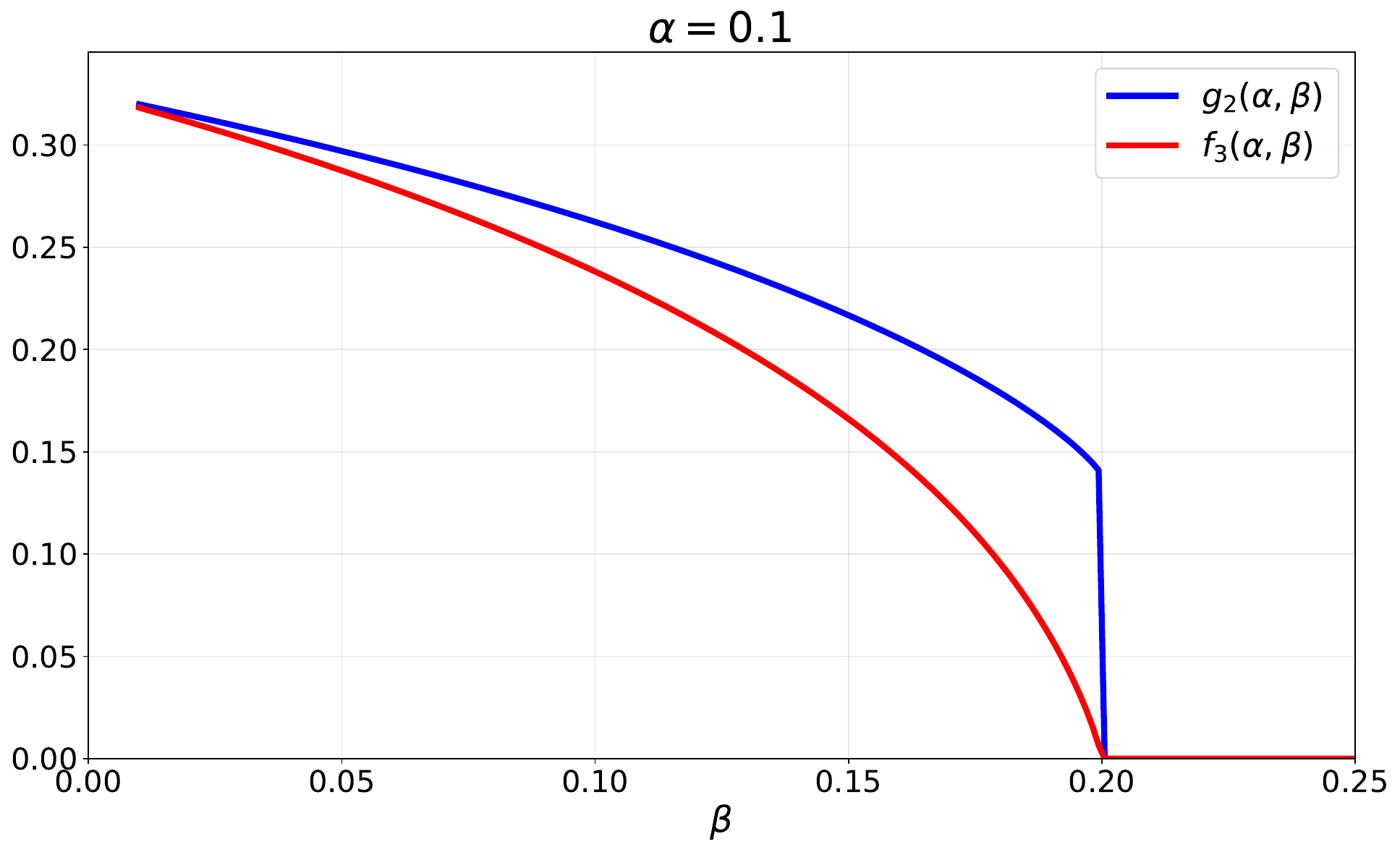}
    \includegraphics[width=0.32\textwidth]{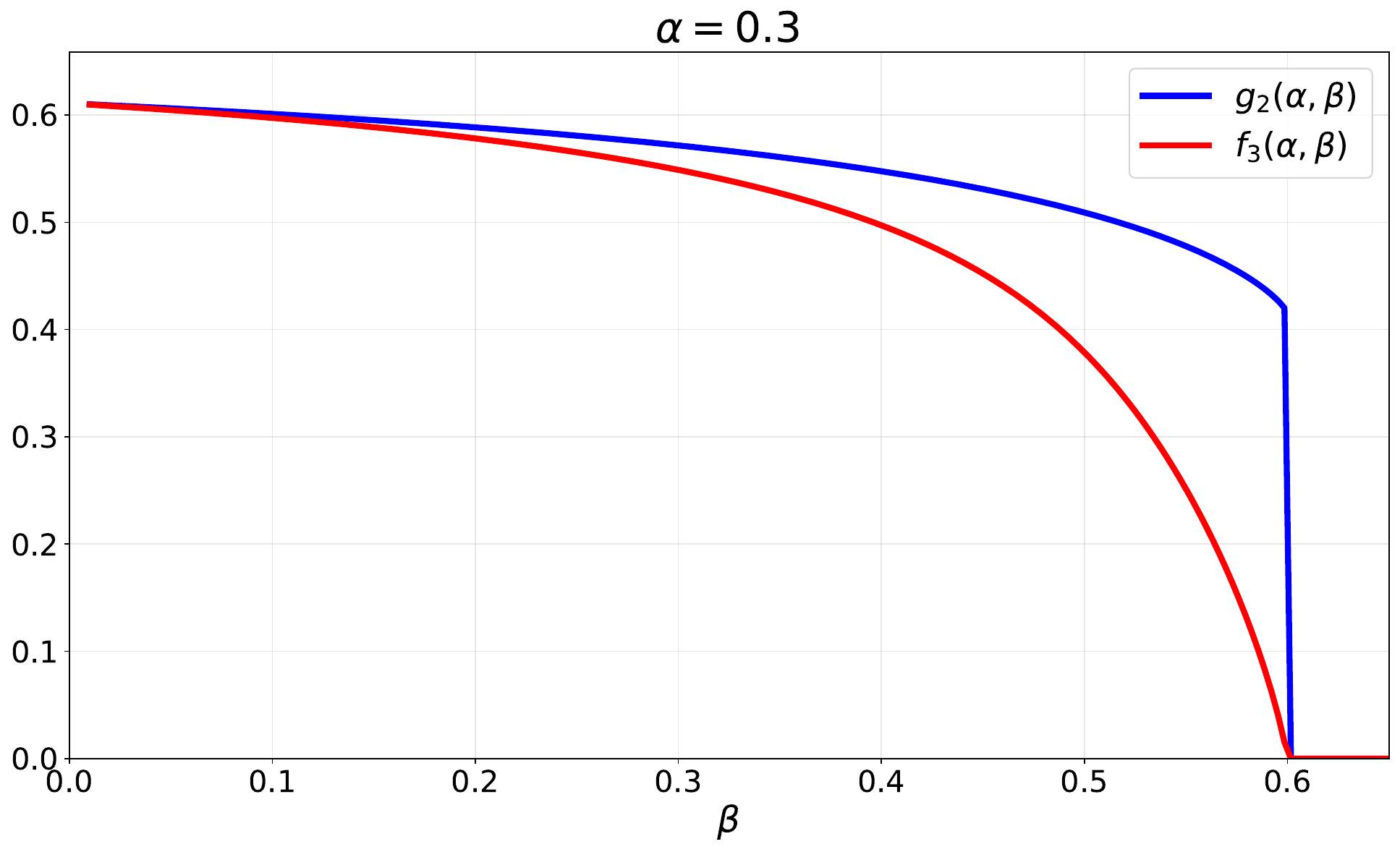}
    \includegraphics[width=0.32\textwidth]{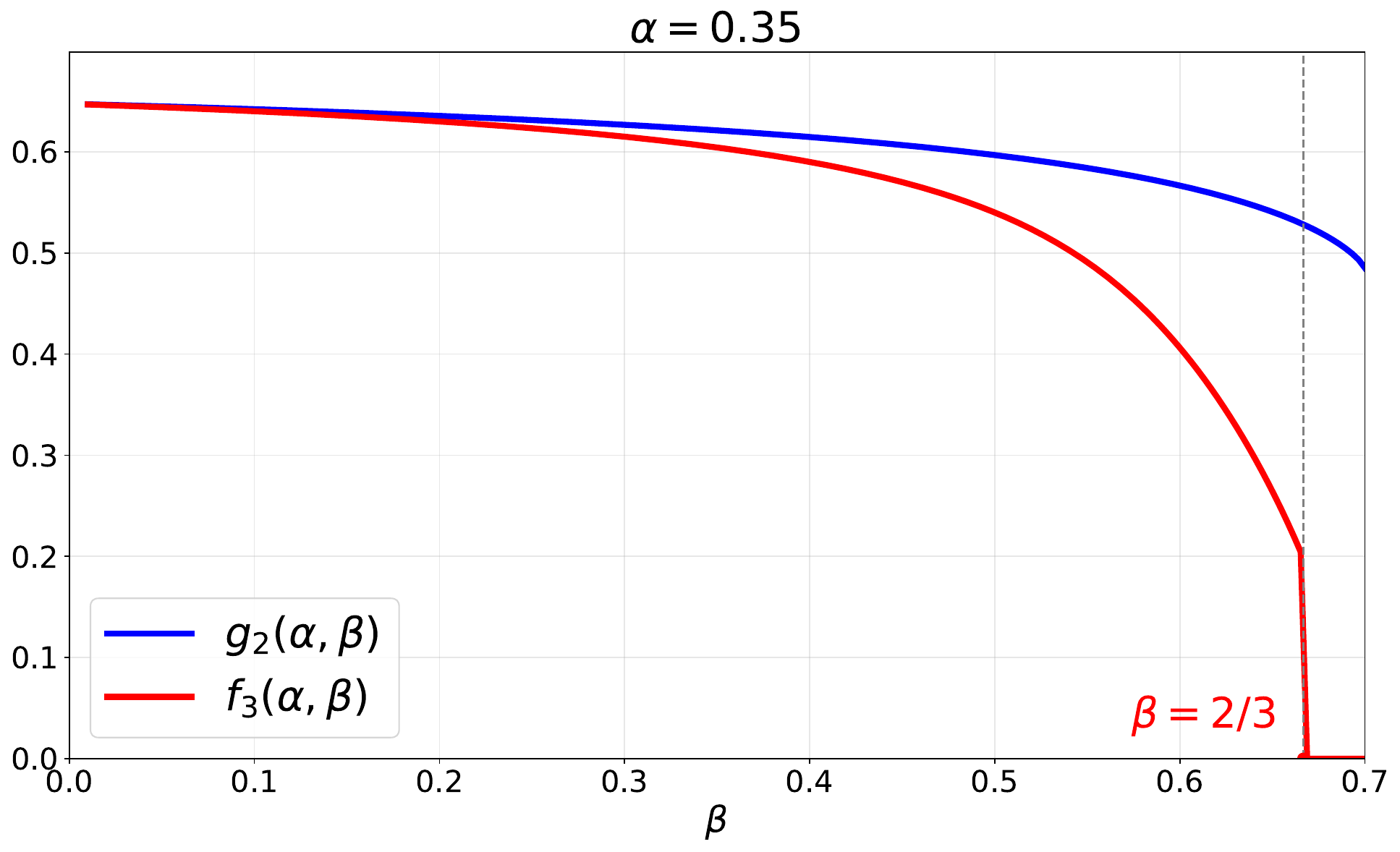}
    \caption{
    Comparison between the three-ball exponent $f_3(\alpha,\beta)$ and the
    two-ball exponent $g_2(\alpha,\beta)$ for several fixed values of
    $\alpha$. The three-ball exponent is consistently smaller, and it
    vanishes as $\beta$ approaches $2\alpha$.
    }
    \label{fig:rate-comparison}
\end{figure}

The numerical results show two clear phenomena. First, the three-ball
intersection has a smaller exponential rate than the two-ball intersection, i.e.,
$f_3(\alpha,\beta)\le g_2(\alpha,\beta)$. This is expected, since adding a third ball
imposes an additional consistency constraint on the candidate vectors. Second,
the difference becomes especially pronounced near the critical boundary
$\beta=2\alpha$. As $\beta\rightarrow 2\alpha$, the three-ball exponent
degenerates to 0.
In contrast, the two-ball exponent remains positive at the same boundary.
Thus the third ball does not merely reduce the intersection by a subexponential
or constant factor; near the boundary, it collapses the exponential rate.

We now explain the critical boundary more explicitly. The condition
$\beta=2\alpha$ is equivalent to $k=2r$, meaning that the distance between
centers is twice the ball radius. This is the largest separation at which two
radius-$r$ balls can still touch. In the three-ball case, the feasible entropy
profile becomes degenerate. Indeed, when $\beta=2\alpha$, feasibility forces
the normalized profile to satisfy $
\theta=\beta/2,
\delta=\beta/2$.
At this profile, the corresponding block densities are $p_1=1, p_{23}=0, p_4=0.$ Then,
$f_3(\alpha,2\alpha)=0$.

The identity $f_3(\alpha,2\alpha)=0$ shows that the three-ball intersection
has no positive exponential growth rate at the critical boundary. However, it
does not by itself determine the remaining subexponential size. To refine this
statement, consider the critical window as $k=2t,
r=t+C$, where $C$ is a fixed integer independent of $n$. The parameter $C$ measures the
fixed excess radius beyond the critical value $t=k/2$. In this regime, the
intersection is polynomially bounded as
\begin{equation*}
I_{t+C}(a,b,c)
=
O(n^{3C}).
\end{equation*}
This polynomial
bound is consistent with the vanishing exponent and shows that the boundary
$\beta=2\alpha$ marks a sharp transition: in the interior regime
$\beta<2\alpha$, the three-ball intersection may have exponential size, while
at the critical boundary its size is at most polynomial.
\newpage
\bibliographystyle{IEEEtran}
\bibliography{reference}
\end{document}